


\documentclass[12pt,reqno]{amsart}
\usepackage{amsmath,amssymb,amsthm,pinlabel,tikz}
\usepackage[final,
colorlinks=true,
pdftitle={When does a RAAG split over Z?},
pdfauthor={Matt Clay}]{hyperref}


\newcounter{alphathm}

\theoremstyle{plain}
\newtheorem{theorem}{Theorem}[section]
\newtheorem{alphatheorem}[alphathm]{Theorem}
\newtheorem{lemma}[theorem]{Lemma}
\newtheorem{proposition}[theorem]{Proposition}
\newtheorem{corollary}[theorem]{Corollary}

\newtheorem*{claim*}{Claim}

\newcommand{\fakeenv}{} 
{ 
 \renewcommand{\fakeenv}{#2} 
 \theoremstyle{plain} 
 \newtheorem*{\fakeenv}{#1~\ref{#2}} 
 \begin{\fakeenv}
}
{
 \end{\fakeenv}
}

\theoremstyle{definition}
\newtheorem{definition}[theorem]{Definition}
\newtheorem{example}[theorem]{Example}

\newtheorem{remark}[theorem]{Remark}


\newcommand{\bF}{\mathbf{F}}

\newcommand{\ZZ}{\mathbb{Z}} 

\newcommand{\calA}{\mathcal{A}}

\newcommand{\calG}{\mathcal{G}}
\newcommand{\calH}{\mathcal{H}}
\newcommand{\calJ}{\mathcal{J}}


\newcommand{\la}{\langle}
\newcommand{\ra}{\rangle}


\begin{document}


\title[When does a RAAG split over $\ZZ$?]{When does a right-angled
  Artin group split over $\ZZ$?}

\author[M.~Clay]{Matt Clay}
\address{Department of Mathematical Sciences \\
  University of Arkansas\\
  Fayetteville, AR 72701}
\email{\href{mailto:mattclay@uark.edu}{mattclay@uark.edu}}

\begin{abstract}
  We show that a right-angled Artin group, defined by a graph $\Gamma$
  that has at least three vertices, does not split over an infinite
  cyclic subgroup if and only if $\Gamma$ is biconnected.  Further, we
  compute JSJ--decompositions of 1--ended right-angled Artin groups
  over infinite cyclic subgroups.
\end{abstract}

\date{\today}

\maketitle


\section{Introduction}\label{sec:intro}

Given a finite simplicial graph $\Gamma$, the right-angled Artin group
(RAAG) $A(\Gamma)$ is the group with generating set $\Gamma^0$, the
vertices of $\Gamma$, and with relations $[v,w] = 1$ whenever vertices
$v$ and $w$ span an edge in $\Gamma$.  That is:
\begin{equation*}
  A(\Gamma) = \la \, \Gamma^0 \mid [v,w] = 1 \, \forall v,w \in \Gamma^0
  \mbox{ that span an edge in } \Gamma \, \ra
\end{equation*}

Right-angled Artin groups, simple to define, are at the focal point of
many recent developments in low-dimensional topology and geometric
group theory.  This is in part due to the richness of their subgroups,
in part due to their interpretation as an interpolation between free
groups and free abelian groups and also in part due to the frequency
at which they arise as subgroups of geometrically defined groups.
Recent work of Agol, Wise and Haglund in regards to the Virtual Haken
Conjecture show a deep relationship between 3--manifold groups and
right-angled Artin
groups~\cite{ar:Agol13,ar:HW08,ar:HW12,un:Wise,bk:Wise12}.

One of the results of this paper computes JSJ--decompositions for
1--ended right-angled Artin groups.  This decomposition is a special
type of graph of groups decomposition over infinite cyclic subgroups,
generalizing to the setting of finitely presented groups a tool from
the theory of 3--manifolds. So to begin, we are first concerned with
understanding when a right-angled Artin group splits over an infinite
cyclic subgroup.  Recall, a group $G$ \emph{splits} over a subgroup
$Z$ if $G$ can be decomposed as an amalgamated free product $G =
A\ast_Z B$ with $A \neq Z \neq B$ or as an HNN-extension $G = A\ast_Z$.

Suppose $\Gamma$ is a finite simplicial graph.  A subgraph $\Gamma_1
\subseteq \Gamma$ is \emph{induced} if two vertices of $\Gamma_1$ span an
edge in $\Gamma_1$ whenever they span an edge in $\Gamma$.  If
$\Gamma_1 \subseteq \Gamma$ is a induced subgraph, then the natural map
induced by subgraph inclusion $A(\Gamma_1) \to A(\Gamma)$ is
injective.  A vertex $v \in \Gamma^0$ is a \emph{cut vertex} if the
induced subgraph spanned by the vertices $\Gamma^0 - \{v\}$ has more
connected components than $\Gamma$.  A graph $\Gamma$ is
\emph{biconnected} if for each vertex $v \in \Gamma^0$, the induced
subgraph spanned by the vertices $\Gamma^0 - \{v\}$ is connected.  In
other words, $\Gamma$ is biconnected if $\Gamma$ is connected and does not contain a cut vertex.
Note, $K_2$, the complete graph on two vertices, is biconnected.

\begin{remark}\label{rem:cut vertex}
  There is an obvious sufficient condition for a right-angled Artin
  group to split over a subgroup isomorphic to $\ZZ$. (In what follows
  we will abuse notation and simply say that the group splits over
  $\ZZ$.)  Namely, if a finite simplicial graph $\Gamma$ contains two
  proper induced subgraphs $\Gamma_1,\Gamma_2 \subset \Gamma$ such that
  $\Gamma_1 \cup \Gamma_2 = \Gamma$ and $\Gamma_1 \cap \Gamma_2 = v
  \in \Gamma^0$, then $A(\Gamma)$ splits over $\ZZ$.  Indeed, in this
  case we have $A(\Gamma) = A(\Gamma_1) \ast_{A(v)} A(\Gamma_2)$.

  If $\Gamma$ has at least three vertices, such subgraphs exist if and
  only if $\Gamma$ is disconnected or has a cut vertex, i.e., $\Gamma$
  is not biconnected.
\end{remark}

Our first theorem, proved in Section~\ref{sec:split}, states that this
condition is necessary as well.

\begin{alphatheorem}[$\ZZ$--splittings of RAAGs]\label{thmA}
  Suppose $\Gamma$ is a finite simplicial graph that has at least three
  vertices.  Then $\Gamma$ is biconnected if and only if $A(\Gamma)$
  does not split over $\ZZ$.
\end{alphatheorem}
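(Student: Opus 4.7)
The converse direction of the biconditional is already established in Remark~\ref{rem:cut vertex}, so my plan focuses on the forward direction: assuming $\Gamma$ is biconnected with $|\Gamma^0| \geq 3$, show that $A(\Gamma)$ does not split over $\ZZ$. I would argue by contradiction using Bass--Serre theory: suppose $A(\Gamma)$ admits a nontrivial splitting over $\ZZ$, giving a minimal, nontrivial action of $A(\Gamma)$ on a simplicial tree $T$ with infinite cyclic edge stabilizers. The goal is to produce a global fixed point in $T$, a contradiction.

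The first step is to show every generator $v \in \Gamma^0$ acts elliptically on $T$. Since $\Gamma$ is connected with at least three vertices, $v$ has a neighbor $w$, and $\la v, w\ra \cong \ZZ^2$ sits in $A(\Gamma)$. A standard Bass--Serre argument shows that $\ZZ^2$ subgroups act elliptically: a hyperbolic element would have virtually cyclic centralizer, since that centralizer must preserve the axis and edge stabilizers along the axis are cyclic, which is incompatible with being centralized by the second generator of $\ZZ^2$. Hence $v$ fixes a nonempty convex subtree $F_v \subset T$, and for each edge $\{v,w\}$ of $\Gamma$ the intersection $F_v \cap F_w$ is nonempty, because $\la v, w\ra$ is itself elliptic.

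By the Helly property for convex subtrees of a tree (pairwise nonempty intersection for a finite collection implies common intersection), producing the desired global fixed point $\bigcap_v F_v \neq \emptyset$ reduces to upgrading the intersection property from adjacent pairs to \emph{all} pairs $v, w \in \Gamma^0$. This propagation step is where biconnectedness must enter essentially: for non-biconnected $\Gamma$ such as $P_3$, the fixed sets of the two leaves are in fact disjoint in the natural splitting over the cut vertex, so connectedness alone cannot be enough.

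For this main step I plan to use biconnectedness in the Menger-theoretic form that any two vertices $v, w \in \Gamma^0$ lie on a common cycle, together with the equivalent ear-decomposition of $\Gamma$ starting from a cycle. I would induct on the ear decomposition. The base case, $\Gamma$ an induced cycle $C_n$, splits by length: $n=3$ gives $A(C_3) = \ZZ^3$, handled by iterated application of the $\ZZ^2$-elliptic fact; $n=4$ gives $A(C_4) \cong F_2 \times F_2$, where each element of either factor commutes with the entire nonabelian other factor and hence is elliptic, so an iterated application of the lemma that two elliptic elements share a fixed point whenever their product is elliptic forces each $F_2$ factor --- and then the whole group --- to fix a point; $n \geq 5$ is the hardest sub-case, which I would treat by analyzing the edge-stabilizing element $z$ using Servatius's description of centralizers in RAAGs to show that the cyclic structure of $C_n$ is incompatible with a nontrivial $\ZZ$-splitting. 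The inductive step of adjoining an ear $P$ between existing vertices $v_1, v_2$ would propagate the fixed point of the prior subgraph's action to the interior vertices of $P$ by the same elliptic/commutation analysis, since each new vertex is adjacent to two elliptic elements whose fixed sets meet the prior common fixed set. The induced-cycle case with $n \geq 5$ is where I expect the main obstacle.
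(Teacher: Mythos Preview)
Your first step has a genuine error: the assertion that ``a hyperbolic element would have virtually cyclic centralizer'' is false when edge stabilizers are infinite cyclic. If $v$ is hyperbolic, its centralizer preserves the axis $T_v$; the kernel of the resulting translation action on $T_v$ lies in each edge stabilizer along $T_v$ and is therefore cyclic, but the image lies in $\ZZ$, so all you get is that the centralizer is cyclic-by-$\ZZ$. That is perfectly compatible with containing $\ZZ^2$ --- indeed $\ZZ^2$ itself acts on a line with infinite cyclic edge stabilizers and one basis element hyperbolic --- so your argument does not force $\la v,w\ra$ to be elliptic. The conclusion that each generator $v$ is elliptic is still true, but it needs the stronger input that in a biconnected graph on at least three vertices every vertex has at least two neighbours $w_1,w_2$, so the centralizer of $v$ contains $\la v,w_1,w_2\ra$, which is either $\ZZ^3$ or contains $F_2$; neither embeds in a cyclic-by-$\ZZ$ group. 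This is essentially the observation the paper uses (in a slightly different packaging) to handle the case where some generator acts hyperbolically.

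The substantive gap is the one you flag yourself: the induced cycle $C_n$ with $n\geq 5$. Your ear-decomposition induction cannot start until this is settled, and the sketch via Servatius's centralizer description is not an argument. The paper's route is to treat all graphs with a Hamiltonian cycle at once, by a short direct case analysis on the action rather than on the graph: if every adjacent pair $\la v_i,v_{i+1}\ra\cong\ZZ^2$ fixes a (necessarily unique) point $p_i$, pick an extremal vertex of the finite subtree spanned by the $p_i$ and observe that two of the $v_j$'s fix overlapping nondegenerate segments there, giving a non-cyclic edge stabilizer; if instead some adjacent pair has no fixed point, then some $v_j$ is hyperbolic and its two neighbours produce a non-cyclic subgroup fixing $T_{v_j}$ pointwise. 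Once cycles are handled, biconnectedness supplies a cycle through every two-edge path of $\Gamma$, and a propagation argument very close in spirit to your Helly idea --- but run along a single edge-path rather than over all pairs at once --- finishes the proof. In short, your overall architecture (reduce to cycles, then propagate) matches the paper's, but the cycle case needs the tree-geometric argument above rather than an appeal to centralizers, and your ellipticity step needs the corrected justification.
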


If $\Gamma$ has one vertex, then $A(\Gamma) \cong \ZZ$, which does not
split over $\ZZ$.  If $\Gamma$ has two vertices, then $A(\Gamma) \cong
F_2$ or $A(\Gamma) \cong \ZZ^2$, both of which do split over $\ZZ$ as
HNN-extensions.

\begin{remark}
\label{rem:1-ended}
We recall for the reader the characterization of splittings of right-angled Artin groups over the trivial subgroup.  Suppose $\Gamma$ is a finite simpicial graph with at least two vertices.  Then $\Gamma$ is connected if and only if $A(\Gamma)$ is freely indecomposable, equivalently 1--ended.  See for instance~\cite{ar:BM01}.      
\end{remark}

In Section~\ref{sec:jsj}, for 1--ended right-angled Artin groups
$A(\Gamma)$ we describe a certain graph of groups decomposition,
$\calJ(\Gamma)$, with infinite cyclic edge groups.  The base graph for
$\calJ(\Gamma)$ is defined by considering the biconnected components
of $\Gamma$, taking special care with the $K_2$ components that
contain a valence one vertex from the original graph $\Gamma$.  Our
second theorem shows that this decomposition is a JSJ--decomposition.

\begin{alphatheorem}[JSJ--decompositions of RAAGs]\label{thmB}
  Suppose $\Gamma$ is a connected finite simplicial graph that has at
  least three vertices.  Then $\calJ(\Gamma)$ is a
  JSJ--decomposition for $A(\Gamma)$.
\end{alphatheorem}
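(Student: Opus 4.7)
The proof strategy is to verify that $\calJ(\Gamma)$ satisfies the defining properties of a JSJ decomposition of $A(\Gamma)$ over the class of infinite cyclic subgroups: its edge groups are infinite cyclic and universally elliptic (elliptic in every $\ZZ$-splitting of $A(\Gamma)$), and $\calJ(\Gamma)$ dominates every other universally elliptic $\ZZ$-splitting.

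First I would confirm that $\calJ(\Gamma)$ is a bona fide graph-of-groups decomposition of $A(\Gamma)$. Iterating the observation of Remark~\ref{rem:cut vertex} across the cut vertices of $\Gamma$ produces a graph-of-groups with one vertex group $A(\Lambda)$ for each biconnected component $\Lambda$ of $\Gamma$ and with infinite cyclic edge groups generated by the cut vertices. The extra care for $K_2$ components containing a valence-one vertex of $\Gamma$ introduces an additional HNN-type splitting that exposes the $\ZZ^2$ vertex group in the abelian-vertex form required by a JSJ decomposition.

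Next I would establish rigidity and universal ellipticity using Theorem~\ref{thmA}. If $\Lambda$ is a biconnected component with at least three vertices, then $A(\Lambda)$ does not split over $\ZZ$, hence fixes a vertex in every $\ZZ$-tree on which $A(\Gamma)$ acts; this makes the corresponding vertex of $\calJ(\Gamma)$ rigid. Each edge group of $\calJ(\Gamma)$ is generated by a cut vertex of $\Gamma$ and therefore lies inside such a rigid vertex group or inside one of the abelian $\ZZ^2$ vertex groups, so is universally elliptic. The $\ZZ^2$ vertex groups arising from $K_2$ blocks play the role of the abelian (flexible) vertex groups in the JSJ.

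The principal challenge will be the universal domination: showing that any $\ZZ$-splitting $\calT$ of $A(\Gamma)$ is dominated by $\calJ(\Gamma)$. For each biconnected component $\Lambda$ with three or more vertices, Theorem~\ref{thmA} forces $A(\Lambda)$ to be elliptic in $\calT$, and the ellipticity of the cut-vertex edge groups then forces neighboring biconnected vertex groups to fix compatible vertices of $\calT$. Assembling these ellipticity constraints along the block-cut tree of $\Gamma$ produces an equivariant map from the Bass--Serre tree of $\calJ(\Gamma)$ onto $\calT$ that collapses only edges interior to the abelian vertex groups. The technical heart of this step is ruling out ``hidden'' $\ZZ$-splittings that cross the block structure, which requires a careful analysis of cyclic subgroups of $A(\Gamma)$ and their centralizers to show that any cyclic edge group appearing in $\calT$ is conjugate into a single biconnected vertex group of $\calJ(\Gamma)$.
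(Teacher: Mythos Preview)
Your overall framework---verify the Guirardel--Levitt axioms for $\calJ(\Gamma)$---matches the paper's, but you misidentify where the work lies and you overlook the paper's key lemma, which short-circuits most of what you propose.

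The paper proves the single statement that \emph{every} vertex group of $\calJ(\Gamma)$ is elliptic in \emph{every} $\calA$--tree (Lemma~\ref{lem:Jvertex-elliptic}). Since edge groups are contained in vertex groups, this gives universal ellipticity of $T_{\calJ(\Gamma)}$ for free, and since domination in the Guirardel--Levitt sense is precisely ellipticity of vertex stabilizers, it gives domination of all $\calA$--trees (hence in particular all universally elliptic ones) for free as well. There is no separate ``universal domination'' step and no need to analyse cyclic edge groups of an arbitrary $\calA$--tree or their centralisers; your proposed ``technical heart'' simply does not occur.

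The point you miss is the status of the $\ZZ^2$ vertex groups. You treat them as flexible/abelian vertices in the JSJ sense, but in fact they are elliptic in every $\calA$--tree for $A(\Gamma)$. The paper proves this using the embedding in the ambient graph: for a non-hanging toral vertex with $\Gamma_x = \{v_1,v_2\}$, both $v_i$ are cut vertices, so there exist $w_1,w_2 \in \Gamma^0$ with $[v_i,w_j]=1$ iff $i=j$; if $v_1$ were hyperbolic one finds, as in Lemma~\ref{lem:Hamiltonian}, an $F_2$ fixing an edge, contradicting cyclic edge stabilisers. Corollary~\ref{cor:cv} then forces $\la v_1,v_2\ra \cong \ZZ^2$ to be elliptic. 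For hanging toral vertices the vertex group is already $\ZZ$ (not $\ZZ^2$ as you suggest), and its ellipticity is handled either by inclusion in a non-hanging vertex group or, in the exceptional case $A(\Gamma)\cong F_n\times\ZZ$, by a centrality argument. Your assertion that edge groups are universally elliptic because they ``lie inside one of the abelian $\ZZ^2$ vertex groups'' is not a justification on its own; it only becomes one after you know those $\ZZ^2$ groups are themselves elliptic, which is exactly the content you are missing.
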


\subsection*{Acknowledgements} Thanks go to Matthew Day for posing the
questions that led to this work.  Also I thank Vincent Guirardel and Gilbert
Levitt for suggesting the use of their formulation of a
JSJ--decomposition which led to a simplification of the exposition in
Section~\ref{sec:jsj}.  Finally, I thank Denis Ovchinnikov and the anonymous referee for noticing an error in a previous version in the proof of Proposition~\ref{prop:biconnected} which resulted in a simplification in the proof of Theorem~\ref{thmA}.


\section{Splittings of RAAGs over $\ZZ$}\label{sec:split}

This section contains the proof of Theorem~\ref{thmA}.  The outline is
as follows.  First, we will exhibit a family of right-angled Artin
groups that do not split over $\ZZ$.  Then we will show how if
$A(\Gamma)$ is sufficiently covered by subgroups that do not split
over $\ZZ$, then neither does $A(\Gamma)$.  Finally, we will show how to
find enough subgroups to sufficiently cover $A(\Gamma)$ when $\Gamma$
has at least three vertices and is biconnected.


\subsection*{Property $\bF(\calH)$} We begin by recalling some basic notions about group actions on trees, see \cite{bk:Serre03} for proofs.  In what follows, all trees are simplicial and all actions are without inversions, that is $ge \neq \bar{e}$ for all $g \in G$ and edges $e$.  When a group $G$ acts on a tree $T$, the \emph{length} of an element $g \in G$ is $|g| = \inf \{ d_{T}(x,gx) \mid x \in T \}$ and the 
\emph{characteristic
  subtree} is $T_g = \{ x \in T \mid d_{T}(x,gx) = |g| \}$.  The characteristic subtree is always non-empty.  If $|g| = 0$, then $g$ is said to be \emph{elliptic} and $T_{g}$ consists of the set of fixed points.  Else, $|g| > 0$ and $g$ is said to be \emph{hyperbolic}, in which case $T_{g}$ is a linear subtree, called the \emph{axis} of $g$, and $g$ acts on $T_{g}$ as a translation by $|g|$.
  
The following property puts some
control over the subgroups that a given group can split over.

\begin{definition}\label{def:FH}
  Suppose $\calH$ is a collection of groups.  We say a group $G$
  \emph{has property} $\bF(\calH)$ if whenever $G$ acts on a tree,
  then either there is a global fixed point or $G$ has a subgroup
  isomorphic to some group in $\calH$ that fixes an edge.

  If $\calH = \{ H \}$ we will write $\bF(H)$.
\end{definition}

\begin{remark}\label{rem:FH}
  Bass--Serre theory~\cite{bk:Serre03} implies that if $G$ has
  property $\bF(\calH)$ and $G$ splits over a subgroup $Z$, then $Z$
  has a subgroup isomorphic to some group in $\calH$.
\end{remark}

For the sequel we consider the collection $\calH = \{ F_2,\ZZ^2\}$,
where $F_2$ is the free group of rank 2.  We can reformulate the
question posed in the title using the following proposition.

\begin{proposition}\label{prop:Fz2f2}
  Suppose $\Gamma$ is a finite simplicial graph that has at least three
  vertices.  Then $A(\Gamma)$ has property $\bF(\calH)$ if and only
  if $A(\Gamma)$ does not split over $\ZZ$.
\end{proposition}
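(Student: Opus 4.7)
The plan is to establish both implications of the biconditional. The forward direction is an immediate consequence of Remark~\ref{rem:FH}: if $A(\Gamma)$ has property $\bF(\calH)$ and splits over a subgroup $Z$, then $Z$ must contain a subgroup isomorphic to $F_2$ or $\ZZ^2$; since neither embeds in $\ZZ$, no splitting of $A(\Gamma)$ over $\ZZ$ can exist. This direction requires no additional input and is not where the work lies.

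For the converse, I would argue by contrapositive. Assuming $A(\Gamma)$ does not have property $\bF(\calH)$, there exists a simplicial action of $A(\Gamma)$ on a tree $T$ without a global fixed point and such that no edge stabilizer contains a copy of $F_2$ or $\ZZ^2$. The key observation is that, under these restrictions, each edge stabilizer must be either trivial or infinite cyclic. This uses Baudisch's theorem that any two non-commuting elements of a RAAG generate a free subgroup of rank two: a subgroup of $A(\Gamma)$ containing no $F_2$ must be abelian, and if it additionally contains no $\ZZ^2$ then, since $A(\Gamma)$ is torsion-free, it must be either trivial or infinite cyclic.

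Having restricted the edge stabilizers, I would pass to a minimal $A(\Gamma)$-invariant subtree $T' \subseteq T$ and appeal to Bass--Serre theory to produce a graph of groups decomposition of $A(\Gamma)$. This decomposition has at least one edge, and each edge of the minimal decomposition yields a genuine splitting of $A(\Gamma)$ over its stabilizer. If some edge stabilizer is infinite cyclic we have produced a splitting over $\ZZ$ and are done. Otherwise every edge stabilizer is trivial, in which case $A(\Gamma)$ decomposes nontrivially as a free product. By Remark~\ref{rem:1-ended}, this forces $\Gamma$ to be disconnected, and then Remark~\ref{rem:cut vertex} applied to $\Gamma$ (having at least three vertices) provides the desired $\ZZ$-splitting.

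The principal obstacle is the cyclicity of edge stabilizers in the second paragraph: establishing that a subgroup of a RAAG avoiding both $F_2$ and $\ZZ^2$ must be cyclic. Everything else follows from standard Bass--Serre theory together with the observations already recorded in Remarks~\ref{rem:cut vertex} and~\ref{rem:1-ended}.
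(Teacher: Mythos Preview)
Your approach is essentially the same as the paper's, just cast as a contrapositive rather than a direct proof; both reduce to the same key step of showing that an edge stabilizer containing neither $F_2$ nor $\ZZ^2$ must be cyclic.

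There is one point where your justification needs strengthening. You argue that an abelian subgroup of $A(\Gamma)$ containing no $\ZZ^2$ must be trivial or infinite cyclic ``since $A(\Gamma)$ is torsion-free.'' Torsion-freeness alone does not suffice: a torsion-free abelian group with no $\ZZ^2$ subgroup is merely locally cyclic, and groups such as $\ZZ[\tfrac12]$ or $\mathbb{Q}$ show this need not be cyclic. What is actually needed is that abelian subgroups of a RAAG are \emph{finitely generated}; the paper invokes this via the Salvetti complex being a finite $K(A(\Gamma),1)$. Once you have finite generation, a torsion-free abelian group avoiding $\ZZ^2$ is indeed trivial or $\ZZ$, and the remainder of your argument goes through exactly as written.
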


\begin{proof}
  Bass--Serre theory (Remark~\ref{rem:FH}) implies that if $A(\Gamma)$ has
  property $\bF(\calH)$ then $A(\Gamma)$ does not split over $\ZZ$.

  Conversely, suppose that $A(\Gamma)$ does not split over $\ZZ$ and $A(\Gamma)$ acts
  on a tree $T$ without a global fixed point.  The stabilizer of any edge is non-trivial as freely decomposable right-angled Artin groups whose defining graphs have at least three vertices split over $\ZZ$ (Remarks~\ref{rem:cut vertex} and~\ref{rem:1-ended}).
  
    We claim the stabilizer
  of any edge contains two elements that do not generate a cyclic
  group.  As a subgroup generated by two elements in a right-angled
  Artin group is either abelian or isomorphic to $F_2$
  \cite{ar:Baudisch81}, this shows that $A(\Gamma)$ has property $\bF(\calH)$.
  To prove the claim, let $Z$ denote the stabilizer of some edge of
  $T$ and suppose $\la g, h \ra \cong \ZZ$ for all $g,h \in Z$.  Thus
  $Z$ is abelian.  Since abelian subgroups of right-angled Artin groups
  are finitely generated (as the Salvetti complex is a finite $K(A(\Gamma),1)$~\cite{col:CD95}) we have $Z \cong \ZZ$.  But this contradicts
  our assumption that $A(\Gamma)$ does not split over $\ZZ$.
\end{proof}

Thus we are reduced to proving that property $\bF(\calH)$ is
equivalent to biconnectivity for right-angled Artin groups whose
defining graph has at least three vertices.


\subsection*{A family of right-angled Artin groups that do not split over $\ZZ$}

The following simple lemma of Culler--Vogtmann relates the
characteristic subtrees of commuting elements.  As the proof is short,
we reproduce it here.

\begin{lemma}[{Culler--Vogtmann~\cite[Lemma~1.1]{ar:CV96}}]\label{lem:cv}
  Suppose a group $G$ acts on a tree $T$ and let $g$ and $h$ be
  commuting elements.  Then the characteristic subtree of $g$ is
  invariant under $h$.  In particular, if $h$ is hyperbolic, then the
  characteristic subtree of $g$ contains $T_h$.
\end{lemma}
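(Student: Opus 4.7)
The plan is a short direct computation combined with a standard fact from Bass--Serre theory. First, to show that $T_g$ is $h$-invariant, I would take an arbitrary point $x \in T_g$ and verify that $hx$ again lies in $T_g$. Since $h$ acts on $T$ by isometries and $g$ and $h$ commute, the calculation is
\begin{equation*}
d_T(hx, g(hx)) = d_T(hx, h(gx)) = d_T(x, gx) = |g|,
\end{equation*}
where the first equality uses $gh = hg$, the second uses that $h$ preserves distances, and the third uses $x \in T_g$. Since $|g| = \inf_{y \in T} d_T(y, gy)$, the displacement $d_T(hx, g(hx))$ realizes this infimum, so $hx \in T_g$. As $x$ was arbitrary, this gives $h T_g \subseteq T_g$; applying the same argument to $h\inv$ yields the opposite inclusion, so $T_g$ is $h$-invariant.

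For the ``in particular'' clause, I would invoke the standard fact from the structure theory of tree isometries that when $h$ is hyperbolic, the axis $T_h$ is the unique minimal non-empty $h$-invariant subtree of $T$; this is part of the material recalled in~\cite{bk:Serre03}. The subtree $T_g$ is non-empty (as noted just before the lemma, characteristic subtrees are always non-empty), and by the first part it is $h$-invariant. Minimality then forces $T_h \subseteq T_g$.

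There is really no substantive obstacle here: the entire argument consists of a two-line commutator-invariance calculation together with one invocation of a well-known property of axes of hyperbolic tree isometries.
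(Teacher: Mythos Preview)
Your proof is correct and is essentially the same as the paper's. The paper packages the first step slightly differently, observing the general identity $h(T_g) = T_{hgh^{-1}}$ and then using $hgh^{-1}=g$, but this is exactly your pointwise displacement calculation rephrased; the second step is identical in both.
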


\begin{proof}
  As $h(T_g) = T_{hgh^{-1}}$ if $g$ and $h$ commute then $h(T_g) =
  T_g$.  If $h$ is hyperbolic, then every $h$--invariant subtree
  contains $T_h$.
\end{proof}

\begin{corollary}
\label{cor:cv}
If $\ZZ^{2}$ acts on a tree without a global fixed point, then for any basis $\{g,h\}$, one of the elements must act hyperbolically.
\end{corollary}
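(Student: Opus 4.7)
The plan is to argue by contradiction using Lemma~\ref{lem:cv}. I would assume that $\ZZ^2$ acts on a tree $T$ without a global fixed point and that some basis $\{g,h\}$ consists of two elliptic elements, so that both $T_g$ and $T_h$ are nonempty subtrees of $T$. Because $g$ and $h$ commute, Lemma~\ref{lem:cv} immediately yields that $T_g$ is $h$--invariant and $T_h$ is $g$--invariant. The first case to dispose of is the easy one: if $T_g \cap T_h \neq \emptyset$, then any point of the intersection is fixed by both $g$ and $h$, hence by all of $\la g,h\ra = \ZZ^2$, contradicting the hypothesis that there is no global fixed point.

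The remaining case is $T_g \cap T_h = \emptyset$. Here I would use the standard fact that two disjoint subtrees of a simplicial tree are joined by a unique shortest path, the \emph{bridge}; let $p \in T_g$ and $q \in T_h$ be its endpoints. Since $h$ fixes $q$ pointwise and preserves $T_g$, the image $h(p)$ must be the unique point of $T_g = h(T_g)$ closest to $q = h(q)$; uniqueness forces $h(p)=p$. But then $p \in T_g \cap T_h$, contradicting disjointness, so one of $g$ or $h$ must actually act hyperbolically.

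I do not anticipate a real obstacle in this argument: Lemma~\ref{lem:cv} of Culler--Vogtmann carries most of the weight, and the remainder is a routine bridge argument in a tree. The one minor point worth stating carefully is the uniqueness of the closest-point projection from $q$ to $T_g$, which holds because simplicial trees are $0$--hyperbolic.
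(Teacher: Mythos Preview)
Your argument is correct and is essentially the same as the paper's: both proceed by contradiction, use Lemma~\ref{lem:cv} to get $h$--invariance of $T_g$, and then invoke the uniqueness of the bridge between disjoint subtrees to force a common fixed point. The paper phrases the bridge step as ``the unique segment connecting $T_g$ to $T_h$ is fixed by $h$ and hence contained in $T_h$,'' which is the same observation you unpack via the closest-point projection of $q$ to $T_g$.
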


\begin{proof}
Suppose that both $g$ and $h$ are elliptic.  
As $hT_{h} = T_{h}$ and $hT_{g} = T_{g}$ by Lemma~\ref{lem:cv}, the unique segment connecting $T_{g}$ to $T_{h}$ is fixed by $h$ and hence contained in $T_{h}$.  In other words $T_{g} \cap T_{h} \neq \emptyset$ and therefore there is a global fixed point.
\end{proof}

Recall that a \emph{Hamiltonian cycle} in a graph is an embedded cycle that visits each vertex exactly once.    

\begin{lemma}
\label{lem:Hamiltonian}
If $\Gamma$ is a finite simplicial graph with at least three vertices that contains a Hamiltonian cycle, then $A(\Gamma)$ has property $\bF(\calH)$.
\end{lemma}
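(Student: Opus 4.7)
The plan is: assume $A(\Gamma)$ acts on a simplicial tree $T$ without a global fixed point and, using the Hamiltonian cycle $v_1,\ldots,v_n$, produce a subgroup in $\calH=\{F_2,\ZZ^2\}$ that fixes an edge. I split on whether some cycle vertex acts hyperbolically.

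Suppose first that some $v_i$ acts hyperbolically on $T$; after relabeling, let this be $v_1$ with axis $A$. Both cycle neighbors $v_2,v_n$ commute with $v_1$, so by Lemma~\ref{lem:cv} they preserve $A$ and act there as translations. Writing $\tau$ for the translation-length homomorphism on $\la v_1,v_2,v_n\ra$, set $g=v_2^{\tau(v_1)}v_1^{-\tau(v_2)}$ and $g'=v_n^{\tau(v_1)}v_1^{-\tau(v_n)}$. Both are nontrivial and lie in $\ker\tau$, hence fix $A$ pointwise. To verify $\la g,g'\ra\in\calH$: if $v_2,v_n$ commute in $A(\Gamma)$ then $\la v_1,v_2,v_n\ra\cong\ZZ^3$ and $g,g'$ are linearly independent, giving $\ZZ^2$; if not, $\la v_2,v_n\ra\cong F_2$, and projecting $g,g'$ into this $F_2$ factor of $\la v_1,v_2,v_n\ra\cong\ZZ\times F_2$ yields nontrivial powers of the free basis, so $\la g,g'\ra\cong F_2$ by Baudisch's theorem on two-generator subgroups of right-angled Artin groups.

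Suppose instead every $v_i$ is elliptic. For each consecutive pair, a closest-point projection combined with Lemma~\ref{lem:cv} shows $T_{v_i}\cap T_{v_{i+1}}\neq\emptyset$. If some such intersection contains an edge $e$, then $\la v_i,v_{i+1}\ra\cong\ZZ^2$ fixes $e$ and we are done. Otherwise each intersection is a single vertex $p_i$. If all $p_i$ coincide, that point is fixed by every generator of $A(\Gamma)$, a global fixed point contradicting our assumption. So some geodesic $s_i=[p_{i-1},p_i]\subset T_{v_i}$ is nontrivial.

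The main obstacle will be this last case. My plan is to exploit the fact that the cyclic concatenation $\gamma=s_1s_2\cdots s_n$ is a closed path in the tree $T$, and hence traverses every edge an even number of times. So any edge $e\subset s_i$ must also lie in some $s_j$ with $j\neq i$, giving $e\subset T_{v_i}\cap T_{v_j}$. Since consecutive segments $s_i,s_{i\pm 1}$ meet in a single point, $j$ must be non-adjacent to $i$ in the cycle; this is possible when $n\geq 4$, while for $n=3$ no such $j$ exists and we obtain a contradiction that rules out this configuration outright. For $n\geq 4$, both $v_i,v_j$ fix $e$, and $\la v_i,v_j\ra$ is a two-generator subgroup of $A(\Gamma)$, hence $\ZZ^2$ (when $v_iv_j$ is an edge of $\Gamma$) or $F_2$---in either case a member of $\calH$ fixing $e$.
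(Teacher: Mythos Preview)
Your proof is correct. The hyperbolic case is essentially the paper's Case~II (you pivot on the hyperbolic generator and use its two cycle-neighbors, just as the paper does), and your verification that $\langle g,g'\rangle\in\calH$ via the $\ZZ\times F_2$ versus $\ZZ^3$ dichotomy is a clean way to justify what the paper leaves implicit.

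In the all-elliptic case you take a genuinely different route. The paper defines the same points $p_i$ (as the unique fixed points of $G_i=\langle v_i,v_{i+1}\rangle$, which coincide with your $T_{v_i}\cap T_{v_{i+1}}$), but then looks at an \emph{extremal} vertex $p$ of the subtree spanned by the $p_i$: the two segments $[p,p_{i_0}]$ and $[p,p_{j_1}]$ emanating from $p$ toward the neighboring $p_k$'s must share the unique edge of that subtree at $p$, and the generators $v_{i_1},v_{j_1}$ stabilizing those segments give the required $F_2$ or $\ZZ^2$. Your argument instead concatenates the segments $s_i=[p_{i-1},p_i]$ into a closed edge-path and uses the parity of edge-traversals in a tree to force some edge to lie in two segments $s_i,s_j$; since $s_i\subset T_{v_i}$ and $s_j\subset T_{v_j}$ with $T_{v_i}\cap T_{v_{i\pm1}}$ a single point, you get $j\notin\{i-1,i,i+1\}$, which both handles $n\geq 4$ and rules out the configuration for $n=3$. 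Both approaches exploit the same underlying data (the $p_i$ and the containments $s_i\subset T_{v_i}$); the paper's extremal-vertex trick is perhaps more geometric, while your parity argument is a pleasant combinatorial alternative that makes the $n=3$ degeneration transparent.
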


\begin{proof}
Enumerate the vertices of $\Gamma$ cyclically along the Hamiltonian cycle by
$v_1,\ldots,v_{n}$.  Notice that $G_i = \la v_i,v_{i+1} \ra \cong \ZZ^2$ for all $1 \leq i \leq n$ where the indices are taken modulo $n$.  

Suppose that $A(\Gamma)$ acts on a tree $T$ without a global fixed point.  Further suppose that $G_i$ does not fix an edge, for all $1 \leq i \leq n$.  

There are now two cases.

\medskip \noindent {\it Case I\textup{:} Each $G_i$ fixes a point.}  The point fixed by $G_i$ is unique as $G_i$ does not fix an edge, denote it $p_i$.  If the points $p_i$ are all the same, then there is a global fixed point, contrary to the hypothesis. Consider the subtree $S \subset T$ spanned by the $p_i$.  Let $p$ be an extremal vertex of $S$.  There is a non-empty proper subset $P \subset \{1,\ldots,n\}$ such that $p = p_i$ if and only if $i \in P$.  Let
$i_1,j_0 \in P$ be such that the indices $i_0 = i_1 - 1 \mod n$ and $j_1 = j_0 + 1 \mod n$ do not lie in $P$.  See Figure~\ref{fig:Cn-I}.  It is possible that $i_1 = j_0$ or $i_0 = j_1$.

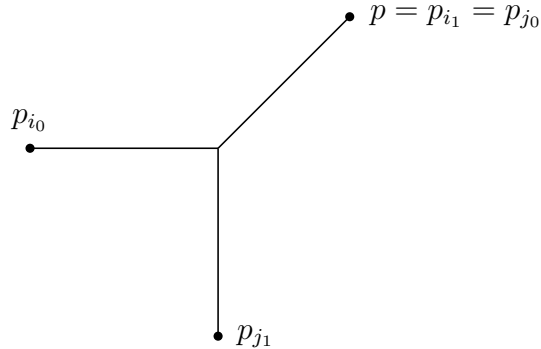
\begin{figure}[ht!]
  \begin{tikzpicture}[semithick,scale=2.5]
    \filldraw[fill=black] (0.7,0.7) circle [radius=0.02];
    \filldraw[fill=black] (-1,0) circle [radius=0.02];
    \filldraw[fill=black] (0,-1) circle [radius=0.02]; 
    \draw (0.75,0.7) node[right] {$p = p_{i_1} = p_{j_0}$}; 
    \draw (-1,0.15) node {$p_{i_0}$}; 
    \draw (0.2,-1) node {$p_{j_1}$}; 
    \draw (0.7,0.7) -- (0,0); 
    \draw (-1,0) -- (0,0); 
    \draw (0,-1) -- (0,0);
  \end{tikzpicture}
  \caption{A portion of the subtree $S \subset T$ in Case I of
    Lemma~\ref{lem:Hamiltonian}.}\label{fig:Cn-I}
\end{figure}

The element $v_{i_1} \in G_{i_0} \cap G_{i_1}$ stabilizes the non-degenerate segment $[p,p_{i_0}]$ and the element $v_{j_1} \in G_{j_0} \cap G_{j_1}$ stabilizes the non-degenerate segment $[p,p_{j_1}]$.  As $p$ is extremal, these segments overlap and thus $\la v_{i_1},v_{j_1} \ra$ fixes an edge in $T$.  This subgroup is isomorphic to either $F_2$ or $\ZZ^2$.

\medskip 

\noindent {\it Case II\textup{:} Some $G_i$ does not fix a point.} Without loss of generality, we can assume that $G_1$ does not fix a point and by Corollary~\ref{cor:cv} that $v_2$ acts hyperbolically.  By Lemma~\ref{lem:cv}, $v_1$ leaves $T_{v_2}$ invariant and so there are integers $k_1,k_2$, where $k_1 \neq 0$, such that $v_1^{k_1}v_2^{k_2}$ fixes $T_{v_2}$.  Likewise there are integers $\ell_2,\ell_3$, where  $\ell_3 \neq 0$ such that $v_2^{\ell_2}v_3^{\ell_3}$ fixes $T_{v_2}$.  Hence $\la v_1^{k_1}v_2^{k_2},v_2^{\ell_2}v_3^{\ell_3} \ra$ fixes $T_{v_2}$, in particular, this subgroup fixes an edge.  This subgroup is isomorphic to either $F_{2}$ or $\ZZ^{2}$.

\medskip

In either case, we have found a subgroup isomorphic to either $F_2$ or $\ZZ^2$ that fixes an edge.  Hence $A(\Gamma)$ has property $\bF(\calH)$.
\end{proof}


\subsection*{Promoting property $\bF(\calH)$}

We now show how to promote property $\bF(\calH)$ to $A(\Gamma)$ if
enough subgroups have property $\bF(\calH)$.

\begin{proposition}
\label{prop:decompose}
Suppose $\Gamma$ is a connected finite simplicial graph with at least three vertices and suppose that there is a collection $\calG$ of induced subgraphs $\Delta \subset \Gamma$ such that:
\begin{enumerate}
\item for each $\Delta \in \calG$, $A(\Delta)$ has property
  $\bF(\calH)$, and\label{enum:FH}
\item each two edge segment of $\Gamma$ is contained in some $\Delta
  \in \calG$.\label{enum:two edge}
\end{enumerate}
Then $A(\Gamma)$ has property $\bF(\calH)$.
\end{proposition}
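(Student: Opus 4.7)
The plan is to suppose $A(\Gamma)$ acts on a tree $T$ in such a way that no subgroup in $\calH$ fixes an edge of $T$, and to produce a global fixed point. First I will observe that, by hypothesis~(1) together with this running assumption, each $A(\Delta)$ with $\Delta \in \calG$ must fix a non-empty subtree $F_\Delta \subseteq T$. Next, since $\Gamma$ is connected with at least three vertices, every edge $\{v,w\}$ of $\Gamma$ lies in a two-edge segment and hence, by hypothesis~(2), inside some $\Delta \in \calG$; therefore $\la v,w \ra \cong \ZZ^2 \in \calH$ is elliptic, and because no $\calH$--subgroup fixes an edge, its fixed subtree $T_v \cap T_w$ contains no edge of $T$ and so collapses to a single point, which I will denote $p_{vw}$.

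The second step is to show that for any two-edge segment $v - u - w$ in $\Gamma$ one has $p_{uv} = p_{uw}$. Choosing $\Delta \in \calG$ containing the segment, both $\la u,v \ra$ and $\la u,w \ra$ lie in $A(\Delta)$ and hence fix $F_\Delta$, so $F_\Delta \subseteq T_u \cap T_v = \{p_{uv}\}$ and symmetrically $F_\Delta \subseteq \{p_{uw}\}$, forcing $F_\Delta = \{p_{uv}\} = \{p_{uw}\}$. Because every vertex $v$ has at least one neighbor (as $\Gamma$ has more than one vertex and is connected) and because any two neighbors of $v$ together with $v$ form a two-edge segment, the point $p_{vw}$ is independent of the neighbor $w$ chosen, so $p_v := p_{vw}$ is a well-defined point of $T$.

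Finally, for adjacent $v, w$ one has $p_v = p_{vw} = p_{wv} = p_w$ directly from the definition, and connectedness of $\Gamma$ then forces all the $p_v$'s to coincide in a single point $p^* \in T$. Each generator $v$ of $A(\Gamma)$ fixes $p^*$ because $v \in \la v,w \ra$ and $\la v,w \ra$ fixes $p_{vw} = p^*$, yielding the desired global fixed point and hence property $\bF(\calH)$. I expect the main technical obstacle to be the collapse of $T_v \cap T_w$ to a single point — the heart of the argument — which requires both the $\ZZ^2$--structure on $\la v,w \ra$ arising from the edge in $\Gamma$ and the standing assumption that no $\calH$--subgroup fixes an edge; the rest of the proof is a straightforward reconciliation of these points along the connected graph $\Gamma$.
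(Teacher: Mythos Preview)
Your proof is correct and follows essentially the same approach as the paper's. Both arguments reduce to showing that, once every $A(\Delta)$ is forced to be elliptic, the $\ZZ^2$--subgroups coming from edges of $\Gamma$ have unique fixed points which are then reconciled along two-edge segments via hypothesis~(2) and propagated by connectedness of $\Gamma$; the only cosmetic difference is that you argue the contrapositive and define the points $p_v$ globally, whereas the paper walks along a single path in $\Gamma$ to locate a $\ZZ^2$ fixing an edge.
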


\begin{proof}
Suppose $A(\Gamma)$ acts on a tree $T$ without a global fixed point.

If for some $\Delta \in \calG$, the subgroup $A(\Delta)$ does not have a fixed point, then by \eqref{enum:FH}, $A(\Delta)$, and hence $A(\Gamma)$, contains a subgroup isomorphic to either $F_2$ or $\ZZ^2$ that fixes an edge.  Therefore, we assume that each $A(\Delta)$ has a fixed point.  In particular, each vertex of $\Gamma$ acts elliptically in $T$.  Also, given three vertices $u, v, w \in \Gamma^{0}$, such that $u$ and $v$ span an edge as do $v$ and $w$, the subgroup $\la u, v, w \ra$ by \eqref{enum:two edge} is contained in some $A(\Delta)$ and hence has a fixed point.  We further may assume the fixed point of such a subgroup $\la u,v,w \ra$ to be unique for else $\la u, v \ra \cong \ZZ^{2}$ fixes an edge. 
  
As there is no global fixed point, there are vertices $v, v' \in \Gamma^{0}$ that do not share a fixed point.  Consider a path from $v$ to $v'$ and enumerate the vertices along this path $v = v_{1},\ldots,v_{n} = v'$.  If for some $1 < i < n-1$, the fixed point of $\la v_{i-1},v_{i},v_{i+1} \ra$ is different from that of $\la v_{i}, v_{i+1}, v_{i+2} \ra$, then $\la v_{i},v_{i+1} \ra \cong \ZZ^{2}$ fixes an edge as this subgroup stabilizes the non-degenerate segment between the fixed points.  If the fixed points are all the same then $v$ and $v'$ have a common fixed point, contrary to our assumptions.
\end{proof}


\subsection*{Proof of Theorem~\ref{thmA}}

Theorem~\ref{thmA} follows from Proposition~\ref{prop:Fz2f2} and the
following proposition.

\begin{proposition}
\label{prop:biconnected}
Suppose $\Gamma$ is a finite simplicial graph that has at least three vertices.  Then $\Gamma$ is biconnected if and only if $A(\Gamma)$ has property $\bF(\calH)$.
\end{proposition}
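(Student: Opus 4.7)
The plan is to prove each direction separately; the heavy lifting has been done by Lemma~\ref{lem:Hamiltonian} and Proposition~\ref{prop:decompose}, so each direction reduces to a short graph-theoretic observation.

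For the reverse direction, assume $\Gamma$ is biconnected. I would apply Proposition~\ref{prop:decompose} to a family $\calG$ built from cycles in $\Gamma$. For any two-edge segment $u - v - w$, biconnectedness of $\Gamma$ says that $\Gamma \setminus \{v\}$ is connected, so there is a simple path $P$ from $u$ to $w$ inside $\Gamma \setminus \{v\}$. Concatenating $P$ with the edges $uv$ and $vw$ produces a simple cycle $C$ in $\Gamma$ containing both edges. Let $\Delta$ be the induced subgraph of $\Gamma$ on the vertex set of $C$; then $\Delta$ has at least three vertices and $C$ is a Hamiltonian cycle in $\Delta$, so Lemma~\ref{lem:Hamiltonian} provides property $\bF(\calH)$ for $A(\Delta)$. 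Letting $\calG$ consist of all such $\Delta$, one for each two-edge segment, meets both hypotheses of Proposition~\ref{prop:decompose} and therefore yields $\bF(\calH)$ for $A(\Gamma)$.

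For the forward direction, assume $\Gamma$ is not biconnected. By Remark~\ref{rem:cut vertex}, together with the hypothesis $|\Gamma^0| \geq 3$, one can write $\Gamma$ as a union $\Gamma_1 \cup \Gamma_2$ of proper induced subgraphs meeting in a single vertex $v$, and one can arrange that each $\Gamma_i$ contains a vertex other than $v$. The resulting nontrivial splitting $A(\Gamma) = A(\Gamma_1) \ast_{\la v \ra} A(\Gamma_2)$ gives an action of $A(\Gamma)$ on its Bass-Serre tree with no global fixed point and with edge stabilizers that are conjugates of $\la v \ra \cong \ZZ$. Since $\ZZ$ contains no subgroup isomorphic to $F_2$ or $\ZZ^2$, this action witnesses the failure of property $\bF(\calH)$.

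The principal technical point is the construction of the cycle $C$ in the reverse direction, but this is immediate from the defining property of biconnectedness and does not require any nontrivial graph theory beyond connectivity of $\Gamma \setminus \{v\}$. The only other mild subtlety is ensuring the splitting in the forward direction is genuinely nontrivial, which is precisely where the hypothesis that $\Gamma$ has at least three vertices enters the argument.
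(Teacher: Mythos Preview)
Your proof is correct and follows essentially the same route as the paper: for the biconnected direction you build, for each two-edge segment, an induced subgraph on the vertices of a simple cycle through that segment (the paper takes the shortest avoiding path, but any simple one works) and invoke Lemma~\ref{lem:Hamiltonian} and Proposition~\ref{prop:decompose}; for the non-biconnected direction you exhibit the Bass--Serre tree of the splitting from Remark~\ref{rem:cut vertex}, which is exactly the content of the paper's appeal to Proposition~\ref{prop:Fz2f2}. The only cosmetic difference is that you unpack the failure of $\bF(\calH)$ directly rather than citing Proposition~\ref{prop:Fz2f2}.
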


\begin{proof}
Suppose $\Gamma$ is biconnected.  Consider the collection $\calG$ of induced subgraphs $\Delta \subseteq \Gamma$ with at least three vertices that contain a Hamiltonian cycle.  By Lemma~\ref{lem:Hamiltonian}, each $\Delta \in \calG$ has property $\bF(\calH)$.  

Consider vertices $u,v,w \in \Gamma^0$ such that $u$ and $v$ span an edge $e$ and $v$ and $w$ span an edge $e'$.  As $\Gamma$ is biconnected, there is an edge path from $u$ to $w$ that avoids $v$.  Let $\rho$ be the shortest such path and let $\Delta$ be the induced subgraph of $\Gamma$ spanned by $v$ and vertices of $\rho$.  The cycle $e \cup e' \cup \rho$ is a Hamiltonian cycle in $\Delta$ and hence $\Delta \in \calG$.  The two edge segment $e \cup e'$ is contained in $\Delta$ by construction.

Hence using the collection $\calG$, Proposition~\ref{prop:decompose} implies that $A(\Gamma)$ has property $\bF(\calH)$.

Conversely, If $\Gamma$ is not biconnected, then $A(\Gamma)$ splits over $\ZZ$ and hence does not have property $\bF(\calH)$ (Remark~\ref{rem:cut vertex} and Proposition~\ref{prop:Fz2f2}).
\end{proof}


\section{JSJ--decompositions of 1--ended RAAGs}\label{sec:jsj}

We now turn our attention towards understanding all $\ZZ$--splittings
of a 1--ended right-angled Artin group.  These are exactly the groups
$A(\Gamma)$ with $\Gamma$ connected and having at least two vertices (Remark~\ref{rem:1-ended}).
The technical tool used for understanding splittings over some class
of subgroups are \emph{JSJ--decompositions}.  There are several
loosely equivalent formulations of the notion of a JSJ--decomposition
of a finitely presented group, originally defined in this setting and
whose existence was shown by Rips--Sela~\cite{ar:RS97}.  Alternative
accounts and extensions were provided by
Dunwoody--Sageev~\cite{ar:DS99}, Fujiwara--Papasogalu~\cite{ar:FP06}
and Guirardel--Levitt~\cite{un:GL}.  

We have chosen to use Guirardel and Levitt's formulation of a
JSJ--decomposition as it avoids many of the technical definitions
necessary for the other formulations---most of which have no real
significance in the current setting---and as it is particularly easy
to verify in the current setting. 

In this section we describe a JSJ--decomposition for a 1--ended
right-angled Artin group (Theorem~\ref{thmB}).  It is straightforward
to verify, given the arguments that follow, that the described graph
of groups decomposition is a JSJ--decomposition in the other
formulations as well.


\subsection*{JSJ--decompositions \` a la Guirardel and Levitt} The
defining property of a JSJ--decomposition is that it gives a
parametrization of all splittings of a finitely presented group $G$
over some special class of subgroups, here the subgroups considered
are infinite cyclic. The precise definition is as follows.

Suppose $\calA$ is a class of subgroups of $G$ that is closed under
taking subgroups and that is invariant under conjugation.  An
\emph{$\calA$--tree} is a tree with an action of $G$ such that every
edge stabilizer is in $\calA$.  An $\calA$--tree is \emph{universally
  elliptic} if its edge stabilizers are elliptic, i.e., have a fixed
point, in every $\calA$--tree.

\begin{definition}[{\cite[Definition~2]{un:GL}}]\label{def:jsj}
  A \emph{JSJ--tree} of $G$ over $\calA$ is a universally elliptic
  $\calA$--tree $T$ such that if $T'$ is a universally elliptic
  $\calA$--tree then there is a $G$--equivariant map $T \to T'$,
  equivalently, every vertex stabilizer of $T$ is elliptic in every
  universally elliptic $\calA$--tree.  The associated graph of group
  decomposition is called a \emph{JSJ--decomposition}.
\end{definition}

We will now describe what will be shown to be the JSJ--decomposition
of a 1--ended right-angled Artin group.

Suppose $\Gamma$ is a connected finite simplicial graph with at least
three vertices.  By $B_\Gamma$ we denote the \emph{block tree}, that
is, the bipartite tree with vertices either corresponding to cut
vertices of $\Gamma$ (black) or bicomponents of $\Gamma$, i.e.,
maximal biconnected induced subgraphs of $\Gamma$, (white) with an edge
between a black and a white vertex if the corresponding cut vertex
belongs to the bicomponent.  See Figure~\ref{fig:jsj} for some
examples.

For a black vertex $x \in B_\Gamma^0$, denote by $v_x$ the
corresponding cut vertex of $\Gamma$.  For a white vertex $x \in
B_\Gamma^0$, denote by $\Gamma_x$ the corresponding bicomponent of
$\Gamma$.  A white vertex $x \in B_\Gamma^0$ is call \emph{toral} if
$\Gamma_x \cong K_2$, the complete graph on two vertices.  A toral
vertex $x \in B_\Gamma$ that has valence one in $B_\Gamma$ is called
\emph{hanging}.

Associated to $\Gamma$ and $B_\Gamma$ is a graph of groups
decomposition of $A(\Gamma)$, denoted $\calJ_0(\Gamma)$.  The base
graph of $\calJ_0(\Gamma)$ is obtained from $B_\Gamma$ by attaching a
one-edge loop to each hanging vertex.  The vertex group of a black
vertex $x \in B_\Gamma^0$ is $G_x = A(v_x) \cong \ZZ$.  The vertex
group of a non-hanging white vertex $x \in B_\Gamma$ is $G_x =
A(\Gamma_x)$.  The vertex group of a hanging vertex $x \in B_\Gamma$
is $G_x = A(v)$ where $v \in \Gamma_x^0$ is the vertex that has
valence more than one in $\Gamma$.  Notice, in this latter case $v$ is
a cut vertex of $\Gamma$.  For an edge $e = [x,y] \subseteq B_\Gamma$
with $x$ black we set $G_e = A(v_x) \cong \ZZ$ with inclusion maps
given by subgraph inclusion.  If $e$ is a one-edge loop adjacent to a
hanging vertex $x$, we set $G_e = G_x$ where the two inclusion maps
are isomorphisms and the stable letter corresponding to the loop is
$w$ where $w \in \Gamma_x^0$ is the vertex that has valence one in
$\Gamma$.

By collapsing an edge adjacent to each valence two black vertex we
obtain a graph of groups decomposition of $A(\Gamma)$, which
we denote $\calJ(\Gamma)$.  It is not necessary for what follows, but we remark that the graph is groups $\calJ(\Gamma)$ is \emph{reduced \textup{(}in the sense of Bestvina--Feighn~\cite{ar:BF91}\textup{)}}, that is, for each vertex of valence less than three the edge groups are proper subgroups of the vertex group.  This property is required for a JSJ--decomposition as defined by Rips--Sela.  Observe that all edge groups of $\calJ(\Gamma)$ are of the form $A(v)$ for some vertex $v \in \Gamma^0$ and in particular maximal infinite cyclic subgroups.  By $T_{\calJ(\Gamma)}$ we denote the associated Bass--Serre tree.
 
\begin{example}\label{ex:jsj}
  Examples of $B_\Gamma$, $\calJ_0(\Gamma)$ and $\calJ(\Gamma)$ for
  two different graphs are shown in Figure~\ref{fig:jsj}.  We have
  $A(\Gamma_1) \cong F_3 \times \ZZ$.  The graph of groups
  decomposition $\calJ_0(\Gamma_1)$ is already reduced so
  $\calJ(\Gamma_1) = \calJ_0(\Gamma_1)$.  In $\calJ(\Gamma_1)$ all of
  the vertex and edge groups are infinite cyclic and all inclusion
  maps are isomorphisms.  Considering the other example,
  $\calJ(\Gamma_2)$ corresponds to the graph of groups decomposition
  $A(\Gamma_2) = \ZZ^3 \ast_\ZZ \ZZ^2 \ast_\ZZ \ZZ^3$ where the
  inclusion maps have image a primitive vector and the images in
  $\ZZ^2$ constitute a basis of $\ZZ^2$.
   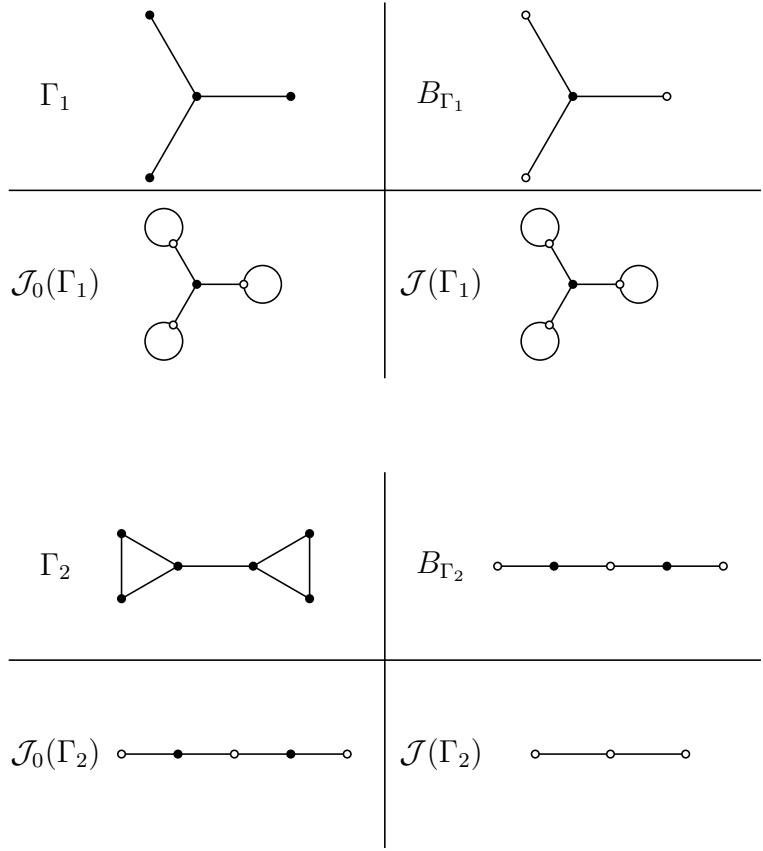
\begin{figure}[ht!]
    \begin{tikzpicture}[semithick,scale=2.5]
      \draw (-2,0) -- (2,0);
      \draw (0,-1) -- (0,1);
      \draw (-2,-2.5) -- (2,-2.5);
      \draw (0,-1.5) -- (0,-3.5);
      \draw (-1,0.5) -- (-0.5,0.5);
      \draw (-1,0.5) -- (-1.25,0.933);
      \draw (-1,0.5) -- (-1.25,0.067);
      \filldraw[fill=black] (-1,0.5) circle [radius=0.02];
      \filldraw[fill=black] (-0.5,0.5) circle [radius=0.02];
      \filldraw[fill=black] (-1.25,0.933) circle [radius=0.02];
      \filldraw[fill=black] (-1.25,0.067) circle [radius=0.02];
      \draw (-1.75,0.5) node {$\Gamma_1$}; 
      \draw (1,0.5) -- (1.5,0.5);
      \draw (1,0.5) -- (0.75,0.933);
      \draw (1,0.5) -- (0.75,0.067);
      \filldraw[fill=black] (1,0.5) circle [radius=0.02];
      \filldraw[fill=white] (1.5,0.5) circle [radius=0.02];
      \filldraw[fill=white] (0.75,0.933) circle [radius=0.02];
      \filldraw[fill=white] (0.75,0.067) circle [radius=0.02];
      \draw (0.3,0.5) node {$B_{\Gamma_1}$}; 
      \draw (-1,-0.5) -- (-0.75,-0.5);
      \draw (-0.65,-0.5) circle [radius=0.1];
      \draw (-1,-0.5) -- (-1.125,-0.7165);
      \draw (-1.175,-0.8031) circle [radius=0.1];
      \draw (-1,-0.5) -- (-1.125,-0.2835);
      \draw (-1.175,-0.1969) circle [radius=0.1];
      \filldraw[fill=black] (-1,-0.5) circle [radius=0.02];
      \filldraw[fill=white] (-0.75,-0.5) circle [radius=0.02];
      \filldraw[fill=white] (-1.125,-0.7165) circle [radius=0.02];
      \filldraw[fill=white] (-1.125,-0.2835) circle [radius=0.02];
      \draw (-1.75, -0.5) node {$\calJ_0(\Gamma_1)$};
      \draw (0.3, -0.5) node {$\calJ(\Gamma_1)$};
      \draw (1,-0.5) -- (1.25,-0.5);
      \draw (-0.65+2,-0.5) circle [radius=0.1];
      \draw (1,-0.5) -- (-1.125+2,-0.7165);
      \draw (-1.175+2,-0.8031) circle [radius=0.1];
      \draw (1,-0.5) -- (-1.125+2,-0.2835);
      \draw (-1.175+2,-0.1969) circle [radius=0.1];
      \filldraw[fill=black] (1,-0.5) circle [radius=0.02];
      \filldraw[fill=white] (1.25,-0.5) circle [radius=0.02];
      \filldraw[fill=white] (-1.125+2,-0.7165) circle [radius=0.02];
      \filldraw[fill=white] (-1.125+2,-0.2835) circle [radius=0.02];
      \draw (-1.1,-2) -- (-0.7,-2);
      \draw (-1.1,-2) -- (-1.4,-1.8268) -- (-1.4,-2.1732) -- (-1.1,-2);
      \draw (-0.7,-2) -- (-0.4,-1.8268) -- (-0.4,-2.1732) -- (-0.7,-2);
      \filldraw[fill=black] (-1.4,-1.8268) circle [radius=0.02];
      \filldraw[fill=black] (-1.4,-2.1732) circle [radius=0.02];
      \filldraw[fill=black] (-0.4,-1.8268) circle [radius=0.02];
      \filldraw[fill=black] (-0.4,-2.1732) circle [radius=0.02];
      \filldraw[fill=black] (-1.1,-2) circle [radius=0.02];
      \filldraw[fill=black] (-0.7,-2) circle [radius=0.02];
      \draw (-1.75,-2) node {$\Gamma_2$};
      \draw (0.6,-2) -- (1.8,-2);
      \filldraw[fill=white] (0.6,-2) circle [radius=0.02];
      \filldraw[fill=black] (0.9,-2) circle [radius=0.02];
      \filldraw[fill=white] (1.2,-2) circle [radius=0.02];
      \filldraw[fill=black] (1.5,-2) circle [radius=0.02];
      \filldraw[fill=white] (1.8,-2) circle [radius=0.02];
      \draw (0.3,-2) node {$B_{\Gamma_2}$};
      \draw (-1.4,-3) -- (-0.2,-3);
      \filldraw[fill=white] (-1.4,-3) circle [radius=0.02];
      \filldraw[fill=black] (-1.1,-3) circle [radius=0.02];
      \filldraw[fill=white] (-0.8,-3) circle [radius=0.02];
      \filldraw[fill=black] (-0.5,-3) circle [radius=0.02];
      \filldraw[fill=white] (-0.2,-3) circle [radius=0.02];
      \draw (-1.75,-3) node  {$\calJ_0(\Gamma_2)$};
      \draw (0.8,-3) -- (1.6,-3);
      \filldraw[fill=white] (0.8,-3) circle [radius=0.02];
      \filldraw[fill=white] (1.2,-3) circle [radius=0.02];
      \filldraw[fill=white] (1.6,-3) circle [radius=0.02];
      \draw (0.3,-3) node  {$\calJ(\Gamma_{2})$};
    \end{tikzpicture}
    \caption{Examples of $B_\Gamma$, $\calJ_0(\Gamma)$ and
      $\calJ(\Gamma)$.}\label{fig:jsj}
  \end{figure}

\end{example}

\subsection*{Proof of Theorem~\ref{thmB}} Theorem~\ref{thmB} follows
immediately from the following lemma.

\begin{lemma}\label{lem:Jvertex-elliptic}
  Suppose $\Gamma$ is a connected finite simplicial graph that has at
  least three vertices and let $\calA$ be the collection of all cyclic
  subgroups of $A(\Gamma)$.  Every vertex stabilizer of
  $T_{\calJ(\Gamma)}$ is elliptic in every $\calA$--tree.

  In particular, every edge stabilizer of $T_{\calJ(\Gamma)}$ is
  elliptic in every $\calA$--tree and so $T_{\calJ(\Gamma)}$ is
  universally elliptic and every vertex stabilizer of
  $T_{\calJ(\Gamma)}$ is elliptic in every universally elliptic
  $\calA$--tree.
\end{lemma}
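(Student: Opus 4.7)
The strategy is to handle vertex stabilizers of two different types separately. For ``large'' vertex groups $A(\Gamma_x)$ with $\Gamma_x$ a bicomponent having at least three vertices, Proposition~\ref{prop:biconnected} gives property $\bF(\calH)$; since edge stabilizers of any $\calA$--tree are cyclic and neither $F_2$ nor $\ZZ^2$ is cyclic, ellipticity in every $\calA$--tree follows. The remaining vertex groups are cyclic or isomorphic to $\ZZ^2$, and these will be handled by a centralizer-rank argument.

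To classify the vertex groups of $\calJ(\Gamma)$, observe that each collapse in the construction is along an edge $[b,x]$ with $G_b = G_{[b,x]} = A(v_b)$ and the inclusion an isomorphism, so the amalgamation $G_x \ast_{G_b} G_b$ is simply $G_x$; collapsing therefore leaves vertex groups unchanged. They come in three types: (a) $A(\Gamma_x)$ for a non-hanging white vertex with $|\Gamma_x^0| \geq 3$; (b) $A(\Gamma_x) \cong \ZZ^2$ where $\Gamma_x \cong K_2$ has both vertices cut vertices; and (c) $A(v) \cong \ZZ$ where $v$ is a cut vertex (either the cut vertex of a hanging $K_2$, or a non-collapsed black vertex). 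Case (a) is immediate from Proposition~\ref{prop:biconnected}, and case (b) reduces to case (c) via Corollary~\ref{cor:cv}: both generators of the $\ZZ^2$ are cut vertices, so each acts elliptically by (c), forcing the $\ZZ^2$ itself to be elliptic.

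For case (c), suppose for contradiction that a cut vertex $v$ acts hyperbolically on some $\calA$--tree $T$. By Lemma~\ref{lem:cv}, the centralizer $C_{A(\Gamma)}(v) = \la v \ra \times A(\link(v))$ preserves the axis $T_v$, and since there are no inversions it acts on this line by translations. The translation-length homomorphism to $\ZZ$ has cyclic image, and its kernel---elements fixing $T_v$ pointwise---lies in a cyclic edge stabilizer, hence is cyclic. Thus $C(v)$ is a cyclic-by-cyclic extension, so is generated by at most two elements. But since the abelianization of $A(\Delta)$ is $\ZZ^{|\Delta^0|}$, $C(v)$ has rank exactly $1 + |\link(v)|$, which is at least $3$ because a cut vertex has a distinct neighbor in each of its (at least two) bicomponents---a contradiction.

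The ``in particular'' assertion is then formal: edge stabilizers of $T_{\calJ(\Gamma)}$ inject into vertex stabilizers, so they too are elliptic in every $\calA$--tree, making $T_{\calJ(\Gamma)}$ universally elliptic; and ellipticity in every $\calA$--tree is \emph{a fortiori} ellipticity in every universally elliptic $\calA$--tree. The main obstacle is case (c): the naive choice $\Delta = \star(v)$ is never biconnected, because distinct bicomponents at a cut vertex $v$ contribute disjoint components of $\link(v)$, so Proposition~\ref{prop:biconnected} cannot be invoked directly; the centralizer-rank argument above bypasses this obstruction by exploiting the subgroup structure of $A(\Gamma)$ rather than any single biconnected subgraph.
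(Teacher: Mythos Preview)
Your proof is correct and handles the ``small'' vertex groups more uniformly than the paper does. Both proofs dispose of non-toral white vertices via Proposition~\ref{prop:biconnected}. For a non-hanging toral vertex $\Gamma_x=\{v_1,v_2\}$, the paper instead chooses neighbours $w_1,w_2$ with $[v_i,w_j]=1$ iff $i=j$ and argues directly that if $v_1$ were hyperbolic then a subgroup $\la v_1^{k_0}w_1^{k_1},\, v_1^{\ell_0}v_2^{\ell_1}\ra\cong F_2$ would fix an edge of $T$; for hanging vertices it either embeds $G_x$ into a non-hanging white vertex group or, in the residual case $A(\Gamma)\cong F_n\times\ZZ$, observes that the kernel of any homomorphism $F_n\times\ZZ\to\ZZ$ is never cyclic. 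Your centralizer-rank argument subsumes all of these at once: every cut vertex $v$ has $|\link(v)|\geq 2$, so $\la v\ra\times A(\link(v))$ has abelianization of rank at least $3$ and hence cannot be cyclic-by-cyclic. This is cleaner and eliminates the case split, while the paper's route stays closer to the explicit $F_2$-fixes-an-edge mechanism already used in Lemma~\ref{lem:Hamiltonian}. One small remark: for your argument you only need the obvious containment $\la v\ra\times A(\link(v))\subseteq C_{A(\Gamma)}(v)$, not the full equality.
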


\begin{proof}
  Let $T$ be an $\calA$--tree.  As $A(\Gamma)$ is 1--ended, every edge
  stabilizer of $T$ is infinite cyclic.  As the vertex groups of a
  black vertex is a subgroup of the vertex group of some white vertex,
  we only need to consider white vertices.  The vertex group of every
  non-toral vertex of $\calJ(\Gamma)$ is elliptic by
  Proposition~\ref{prop:biconnected}.

  Let $x \in B_\Gamma$ be a non-hanging toral vertex.  Denote the
  vertices of $\Gamma_x \cong K_2$ by $v_1$ and $v_2$.  Then there are
  vertices $w_1, w_2 \in \Gamma^0$ such that $[v_i, w_j] = 1$ if and
  only if $i = j$.  In other words, the vertices $w_1,v_1,v_2,w_2$
  span an induced subgraph of $\Gamma$ that is isomorphic to the path
  graph with three edges.

  If $v_1 \in G_x = A(\Gamma_x) \cong \ZZ^2$ acts hyperbolically, then
  by Lemma~\ref{lem:cv} the characteristic subtree of both $w_1$ and
  $v_2$ contains $T_{v_1}$, the axis of $v_1$.  As in the proof of
  Lemma~\ref{lem:Hamiltonian}, we find integers $k_0,k_1,\ell_0,\ell_1$ with
  $k_1, \ell_1 \neq 0$ such that $\la v_1^{k_0}w_1^{k_1},
  v_1^{\ell_0}v_2^{\ell_1} \ra \cong F_2$ fixes $T_{v_1}$ and hence
  fixes an edge.  As every edge stabilizer of $T$ is infinite cyclic,
  this shows that $v_1$ must have a fixed point.  By symmetry $v_2$
  must also have a fixed point.  Since $A(\Gamma_x) = \la v_1,v_2 \ra
  \cong \ZZ^2$, by Corollary~\ref{cor:cv} this implies that $A(\Gamma_x)$ acts elliptically.

Finally, let $x \in B_\Gamma$ be a hanging vertex.  Either $G_x$ is a subgroup of some non-hanging white vertex subgroup and so $G_x$ acts elliptically by the above argument, or $A(\Gamma) \cong F_n \times \ZZ$ for $n \geq 2$ where $G_x$ is the $\ZZ$ factor as is the case for $\Gamma_1$ in Example~\ref{ex:jsj}.  In the latter case, as $G_{x}$ is central, by Lemma~\ref{lem:cv} if $G_{x}$ acts hyperbolically, then $F_{n} \times \ZZ$ acts on its axis.  Therefore there is a homomorphism $F_n \times \ZZ \to \ZZ$ whose kernel fixes an edge.  As every edge stabilizer of $T$ is infinite cyclic, $G_x$ must act elliptically.
\end{proof}

We record the following corollary of Lemma~\ref{lem:Jvertex-elliptic}.

\begin{corollary}
\label{cor:thmB}
Suppose $\Gamma$ is a connected finite simplicial graph that has at least three vertices.  If $A(\Gamma)$ acts on a tree $T$ such that the stabilizer of every edge is infinite cyclic, then every $v \in \Gamma^{0}$ that has valence greater than one acts elliptically in $T$.
\end{corollary}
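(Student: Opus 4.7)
The plan is to derive the corollary directly from Lemma~\ref{lem:Jvertex-elliptic}. The hypothesis says every edge stabilizer of $T$ is infinite cyclic, so $T$ is an $\calA$--tree in the sense of that lemma. Hence it suffices, for each $v \in \Gamma^0$ of valence greater than one, to exhibit a vertex group of $\calJ(\Gamma)$ containing $v$: such a vertex group acts elliptically on $T$ by Lemma~\ref{lem:Jvertex-elliptic}, and therefore so does $v$.

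To produce such a vertex group, let $v \in \Gamma^0$ have valence greater than one. Since $\Gamma$ is connected with at least three vertices and $v$ is not isolated, $v$ lies in at least one bicomponent $\Gamma_y$; this determines a white vertex $y \in B_\Gamma^0$. Split into two cases. If $y$ is non-hanging, then by definition $G_y = A(\Gamma_y)$ contains $v$. If $y$ is hanging, then $\Gamma_y \cong K_2$ has a unique vertex of valence one in $\Gamma$, and since $v$ has valence greater than one, $v$ must be the other vertex of $\Gamma_y$, which is precisely the vertex used to define $G_y = A(v)$. Either way, $v \in G_y$.

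The final point to verify is that $G_y$ really is a vertex group of $\calJ(\Gamma)$, not merely of $\calJ_0(\Gamma)$. This holds because the collapses performed to pass from $\calJ_0(\Gamma)$ to $\calJ(\Gamma)$ only merge valence-two black vertices $x$ into an adjacent white vertex, and in each such collapse the black vertex group $A(v_x) \cong \ZZ$ is identified via the edge map with a subgroup of the white vertex group. This leaves the white vertex groups unchanged, so every such $G_y$ remains a vertex group of $\calJ(\Gamma)$ and its conjugates are vertex stabilizers of $T_{\calJ(\Gamma)}$. I do not anticipate a serious obstacle here: the argument is essentially a structural unpacking of the construction of $\calJ(\Gamma)$, and the real content has already been absorbed into Lemma~\ref{lem:Jvertex-elliptic}.
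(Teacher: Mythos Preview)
Your proof is correct and follows the same approach as the paper: reduce to Lemma~\ref{lem:Jvertex-elliptic} by showing that any $v$ of valence greater than one lies in some vertex group of $\calJ(\Gamma)$. In fact your case analysis is slightly more careful than the paper's one-line proof, which asserts that such a $v$ always lies in a \emph{non-hanging} bicomponent---a claim that fails when $A(\Gamma)\cong F_n\times\ZZ$---whereas you correctly handle the hanging case by observing that $v$ is then precisely the generator of the hanging vertex group $G_y$.
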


\begin{proof}
This follows from Lemma~\ref{lem:Jvertex-elliptic} as each such vertex is contained in some bicomponent $\Gamma_{x}$ for some non-hanging $x \in B_{\Gamma}$ and hence acts elliptically in $T_{\calJ(\Gamma)}$.
\end{proof}


\bibliographystyle{amsplain}
\bibliography{../bibliography}

\end{document}